\newcommand{\figurewidth}{\textwidth}
\newcommand{\halffigurewidth}{0.49\textwidth}
\newcommand{\real}{\mathbb{R}}
\newcommand{\cg}[1]{$\mathrm{cG}(#1)$}
\newcommand{\dg}[1]{$\mathrm{dG}(#1)$}
\newcommand{\dt}{\,\mathrm{d}t}
\newcommand{\ds}{\,\mathrm{d}s}
\newcommand{\inner}[2]{\langle #1, #2 \rangle}
\newcommand{\RR}{\bar{R}}
\newcommand{\emach}{\epsilon_{\textrm{mach}}}
\newcommand{\nmach}{n_{\textrm{mach}}}
\newcommand{\EE}{\mathbf{E}}
\newcommand{\ED}{\mathbf{E}_D}
\newcommand{\EG}{\mathbf{E}_G}
\newcommand{\EC}{\mathbf{E}_C}
\newcommand{\EQ}{\mathbf{E}_Q}
\newcommand{\kk}{\Delta{}t}
\let\oldmarginpar\marginpar
\renewcommand\marginpar[1]{\-\oldmarginpar[\raggedleft\footnotesize #1]%
{\raggedright\footnotesize #1}}
\newcommand{\assumptions}{Let $u : [0, T] \rightarrow \real^N$ be the
  exact solution of the initial value problem~\eqref{eq:u'=f}, let $z
  : [0, T] \rightarrow \real^N$ be the solution of the dual
  problem~\eqref{eq:dual}, and let $U : [0, T] \rightarrow \real^N$ be
  any piecewise smooth approximation of $u$ on a partition $0 = t_0 <
  t_1 < \cdots < t_M = T$ of $[0, T]$, that is, $U \vert_{(t_{m-1},
    t_m]} \in \mathcal{C}^{\infty}((t_{m-1}, t_m])$ for
  $m=1,2,\ldots,M$ \emph{(}$U$ is left-continuous\emph{)}.}
\begin{document}

\title{A posteriori error analysis of round-off errors in the
  numerical solution of ordinary differential equations}

\titlerunning{A posteriori error analysis of round-off errors}

\author{Benjamin Kehlet
        \and
        Anders Logg}

\institute{B. Kehlet \at
           University of Oslo and Simula Research Laboratory,
           P.O.Box 134,
           NO-1325 Lysaker,
           Norway \\
           \email{benjamik@simula.no} \\
           \and
           A. Logg \at
           Mathematical Sciences,
           Chalmers University of Technology,
           SE-41296 Gothenburg,
           Sweden \\
           \email{logg@chalmers.se}}

\date{Received: \today / Accepted: date}

\maketitle

\begin{abstract}
  We prove sharp, computable error estimates for the propagation of
  errors in the numerical solution of ordinary differential
  equations. The new estimates extend previous estimates of the
  influence of data errors and discretisation errors with a new term
  accounting for the propagation of numerical round-off errors,
  showing that the accumulated round-off error is inversely
  proportional to the square root of the step size. As a consequence,
  the numeric precision eventually sets the limit for the pointwise
  computability of accurate solutions of any ODE. The theoretical
  results are supported by numerically computed solutions and error
  estimates for the Lorenz system and the van der Pol oscillator.
\end{abstract}

\keywords{High precision, high order, high accuracy,
  probabilistic error propagation, long-time integration,
  finite element, time stepping, a~posteriori, Lorenz, van der Pol.}

\section{Introduction}
We consider the numerical solution of general initial value problems
for systems of ordinary differential equations (ODE),
\begin{equation} \label{eq:u'=f}
  \begin{split}
    \dot{u}(t) &= f(u(t),t), \quad t \in (0, T],
      \\ u(0) &= u_0,
  \end{split}
\end{equation}
where the right-hand side $f : \real^N \times [0, T] \rightarrow
\real^N$ is assumed to be Lipschitz continuous in~$u$ and continuous
in~$t$. Our objective is to analyse the error in an approximate
solution~$U : [0, T] \rightarrow \real^N$ computed by a single-step
numerical method, such as an explicit or implicit Runge--Kutta method.
For the numerical results presented at the end of this work, we have
used a time-stepping method formulated as a Galerkin finite element
method --- which, for any particular choice of finite element basis
and quadrature, will correspond to a particular implicit Runge--Kutta
method --- but stress that the theoretical results are \emph{not}
particular to time-stepping based on Galerkin formulations but generic
to all single-step methods.

The propagation of local errors and accumulation of global errors in
the numerical solution of ODE have been studied extensively in the literature, see
e.g.
\cite{eriksson_introduction_1995,estep_global_1994,estep_posteriori_1995,estep_pointwise_1998,cao2004posteriori}.
These estimates are based on the formulation of an auxiliary dual
problem: the linearised adjoint problem. From the solution of the dual
problem, the accumulation rate of local errors may be computed, either
as global stability factors or as local stability weights. These
factors or weights, together with a measure of the local error,
typically the residual $R(t) = \dot{U} - f(U(t), t)$ lead to a
computable estimate of the global error.

Standard estimates may include various sources contributing to the
global error, such as discretisation errors, accounting for the use of
finite time steps, quadrature errors, accounting for the approximation
of the right-hand side $f$ by a particular quadrature rule, and data
errors, accounting for the approximation of the initial value $u_0$.
In this work, we extend these estimates by adding a new term
accounting for the use of finite numeric precision in the
computation of the numerical solution. This error is normally
neglected, since it is typically much smaller than the contribution
from the data or discretisation error. However, when the
system~\eqref{eq:u'=f} is very sensitive to perturbations, when the
time interval $[0, T]$ is very long, or when a solution is sought with
very high accuracy, the effect of numerical round-off errors as a
result of finite numeric precision can and will be the dominating
error source, which ultimately limits the computability of a given
problem.

\section{Main result}

We prove below that the error $\EE$ in a computed numerical
solution~$U$ approximating the exact solution~$u$ of the
ODE~\eqref{eq:u'=f} is the sum of three contributions:
\begin{equation*}
  \EE = \ED + \EG + \EC,
\end{equation*}
where $\ED$ is the data error, which is nonzero if $U(0) \neq u(0)$;
$\EG$ is the discretisation error, which is nonzero as a result of a
finite time step; and $\EC$ is the computational error, which is
nonzero as a result of finite numerical precision. Furthermore, we
bound each of the three contributions as the product of a stability
factor and a residual that measures the size of local contributions to
the error. The size of the residuals may be estimated in terms of the
size of the time step. We find that
\begin{equation*}
  \EE \sim S_D(T) \|U(0) - u(0)\| + S_G(T) \kk^{r} + S_C(T) \kk^{-1/2},
\end{equation*}
where $\kk$ is the size of the time step, $r$ the order of convergence
of the numerical method, and $S_D(T)$, $S_G(T)$, $S_C(T)$ are
stability factors which can be computed a~posteriori.

\section{Error analysis}
\label{sec:erroranalysis}

The error analysis is based on the solution of an auxiliary \emph{dual
  problem} and follows the now well established techniques developed
in~\cite{estep_global_1994}, \cite{estep_posteriori_1995},
\cite{eriksson_introduction_1995} and \cite{becker_optimal_2001}, with
extensions to account for the accumulation of round-off errors.

The dual (linearised adjoint) problem takes the form of an initial
  value problem for a system of linear ordinary differential
  equations:
\begin{equation} \label{eq:dual}
  \begin{split}
    - \dot{z}(t) &= \bar{A}^{\top}(t) z(t), \quad t \in [0, T), \\
    z(T) &= z_T.
  \end{split}
\end{equation}
Here, $\bar{A}$ denotes the Jacobian matrix of the right-hand side~$f$
averaged over the approximate solution~$U$ and the exact solution~$u$:
\begin{equation} \label{eq:average}
  \bar{A}(t) = \int_0^1 \frac{\partial f}{\partial u}(sU(t) + (1-s)u(t), t) \ds.
\end{equation}
By the chain rule, it follows that
\begin{equation*}
  \begin{split}
    \bar{A}(t) (U(t) - u(t))
    &= \int_0^1 \frac{\partial f}{\partial u}(sU(t) + (1 - s)u(t), t)(U(t) - u(t)) \ds \\
    &= \int_0^1 \frac{\partial}{\partial s} f(sU(t) + (1 - s)u(t), t) \ds
    = f(U(t), t) - f(u(t), t).
  \end{split}
\end{equation*}

Based on the formulation of the dual problem we may now derive a
(standard) error representation
(Theorem~\ref{th:errorrepresentation}). It represents the error in an
approximate solution~$U$ (computed by any numerical method) in terms
of the residual~$R$ of the computed solution and the solution~$z$ of
the dual problem~\eqref{eq:dual}. The only assumption we make on the
numerical solution~$U$ is that it is piecewise smooth on a partition
of the interval~$[0, T]$ (or that it may be extended to such a
function). At points where $U$ is smooth, the residual is defined by
\begin{equation*}
  R(t) = \dot{U}(t) - f(U(t), t).
\end{equation*}

\begin{theorem}[Error representation] \label{th:errorrepresentation}
  \assumptions{}
  Then, the error $U(T) - u(T)$ may be represented by
  \begin{equation*}
    \inner{z_T}{U(T) - u(T)} =
    \inner{z(0)}{U(0) - u(0)} +
    \sum_{m=1}^M \inner{z(t_{m-1})}{[U]_{m-1}} +
    \int_0^T \inner{z}{R} \dt,
  \end{equation*}
  where $R(t) = \dot{U}(t) - f(U(t), t)$ is the residual of the
  approximate solution~$U$ and $[U]_{m-1} = U(t_{m-1}^+) - U(t_{m-1})
  = \lim_{t\rightarrow t_{m-1}^+} U(t) - U(t_{m-1})$.
\end{theorem}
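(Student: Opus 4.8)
The plan is to track the scalar quantity $\inner{z(t)}{e(t)}$, where $e(t) = U(t) - u(t)$ denotes the error, to differentiate it on each subinterval where $U$ is smooth, and then to sum the resulting identities across the partition. First I would differentiate on an open interval $(t_{m-1}, t_m)$:
\[
  \frac{\mathrm{d}}{\dt}\inner{z}{e} = \inner{\dot{z}}{e} + \inner{z}{\dot{e}}.
\]
For the first term I substitute the dual equation $\dot{z} = -\bar{A}^{\top} z$, which gives $\inner{\dot{z}}{e} = -\inner{z}{\bar{A}e}$. For the second term I write $\dot{e} = \dot{U} - \dot{u} = \dot{U} - f(u,t)$, insert the residual definition $\dot{U} = R + f(U,t)$, and apply the chain-rule identity $\bar{A}e = f(U,t) - f(u,t)$ established above to obtain $\dot{e} = R + \bar{A}e$, so that $\inner{z}{\dot{e}} = \inner{z}{R} + \inner{z}{\bar{A}e}$. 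The two $\bar{A}e$ contributions cancel, leaving the clean local identity $\frac{\mathrm{d}}{\dt}\inner{z}{e} = \inner{z}{R}$ on each smooth piece.

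Next I would integrate this identity over each subinterval $(t_{m-1}, t_m)$. Since $z$ and $u$ are continuous while $U$ is only left-continuous, the endpoint values must be read as one-sided limits: the right endpoint contributes $\inner{z(t_m)}{U(t_m) - u(t_m)}$ and the left endpoint contributes $\inner{z(t_{m-1})}{U(t_{m-1}^+) - u(t_{m-1})}$. Summing over $m = 1, \ldots, M$ produces a telescoping sum whose right-hand side is exactly $\int_0^T \inner{z}{R}\dt$.

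The final step is to reorganise the telescoping sum into the stated form. At each interior node $t_m$, the left value $U(t_m)$ inherited from one subinterval and the right value $U(t_m^+)$ from the next combine into $-\inner{z(t_m)}{[U]_m}$. The top endpoint yields the target term $\inner{z_T}{U(T) - u(T)}$, while the bottom endpoint produces $\inner{z(0)}{U(0^+) - u(0)}$, which I split using $U(0^+) - u(0) = [U]_0 + (U(0) - u(0))$ to separate the jump at $t_0$ from the data error. Collecting the jump contributions over the nodes $t_0, \ldots, t_{M-1}$ and reindexing reproduces $\sum_{m=1}^M \inner{z(t_{m-1})}{[U]_{m-1}}$; rearranging then gives the claimed representation.

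I expect the main obstacle to be the bookkeeping in the telescoping step rather than any analytic difficulty: because $U$ carries both interior jumps and a possible jump at $t_0$, one must carefully distinguish the left and right limits at every node and verify that the $t_0$ boundary term decomposes correctly into a jump contribution $\inner{z(0)}{[U]_0}$ and the data-error contribution $\inner{z(0)}{U(0) - u(0)}$.
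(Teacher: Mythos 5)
Your proposal is correct and follows essentially the same route as the paper: differentiating $\inner{z}{e}$ and invoking the dual equation together with $\bar{A}e = f(U,\cdot) - f(u,\cdot)$ is precisely the paper's integration-by-parts argument read in differential form, and your telescoping/jump bookkeeping (including the treatment of the node $t_0$) reproduces the paper's boundary terms exactly. No gaps.
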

\begin{proof}
  By the definition of the dual problem, we find that
  \begin{equation*}
    \inner{z_T}{e(T)}
    = \inner{z_T}{e(T)} - \int_0^T \inner{\dot{z} + \bar{A}^{\top}z}{e} \dt
    = \inner{z_T}{e(T)} - \sum_{m=1}^M\int_{t_{m-1}}^{t_m} \inner{\dot{z} + \bar{A}^{\top}z}{e} \dt,
  \end{equation*}
  where $e = U - u$. Noting that $\inner{\bar{A}^{\top}z}{e} =
  \inner{z}{\bar{A}e}$ and integrating by parts, we obtain
  \begin{equation*}
    \inner{z_T}{e(T)} =
    \inner{z(0)}{e(0)} +
    \sum_{m=1}^M \left[ \inner{z(t_{m-1})}{[U]_{m-1}} +
    \int_{t_{m-1}}^{t_m} \inner{z}{\dot{e} - \bar{A}e} \dt \right],
  \end{equation*}
  where $[U]_{m-1} = U(t_{m-1}^+) - U(t_{m-1}^-) = U(t_{m-1}^+) -
  U(t_{m-1})$ denotes the jump of~$U$ at $t = t_{m-1}$. By the
  construction of $\bar{A}$, it follows that $\bar{A}e = f(U, \cdot) -
  f(u, \cdot)$. Hence, $\dot{e} - \bar{A}e = \dot{U} - f(U, \cdot) -
  \dot{u} + f(u, \cdot) = \dot{U} - f(U, \cdot) = R$, which completes the proof.
\end{proof}

\begin{remark}
  Theorem~\ref{th:errorrepresentation} holds for any piecewise smooth
  function $U : [0, T] \rightarrow \real^N$, in particular for any
  piecewise smooth extension of any approximate numerical solution
  obtained by any numerical method for~\eqref{eq:u'=f}.
\end{remark}

We next investigate the contribution to the error in the computed
numerical solution~$U$ from errors in initial data, numerical
discretisation, and computation (round-off errors),
$\EE = \ED + \EG + \EC$, and derive sharp bounds for each term.

To estimate the computational error, we introduce the \emph{discrete
  residual}~$\RR$ defined as follows. For any $p \geq 0$, let
$\{\lambda_k\}_{k=0}^p$ be the Lagrange nodal basis for
$\mathcal{P}^p([0, 1])$, the space of polynomials of degree $\leq p$
on $[0,1]$, on a partition $0 \leq \tau_0 < \tau_1 < \cdots < \tau_p
\leq 1$ of~$[0,1]$, that is, $\mathrm{span} \{\lambda_k\}_{k=0}^p =
\mathcal{P}^p([0, 1])$ and $\lambda_i(\tau_j) = \delta_{ij}$. Then,
the discrete residual $\RR_k$ is defined on each interval $(t_{m-1},
t_m]$ by
\begin{equation} \label{eq:discreteresidual}
  \RR_k^m =
  \lambda_k(0) [U]_{m-1} +
  \int_{t_{m-1}}^{t_m} \lambda_k((t - t_{m-1})/\kk_m) R(t) \dt,
  \quad k=0,1,\ldots,p.
\end{equation}
We also define the corresponding interpolation operator $\pi$ onto
the space of piecewise polynomial functions on the partition $0 = t_0
< t_1 < \cdots < t_M = T$ by
\begin{equation*}
  (\pi v)(t) = \sum_{k=0}^p v(t_{m-1} + \tau_k\kk_m) \, \lambda_k((t - t_{m-1}) / \kk_m),
  \quad t \in (t_{m-1}, t_m].
\end{equation*}
We may now prove the following a~posteriori error estimate.
\begin{theorem}[Error estimate] \label{th:errorestimate}
  \assumptions{}
  Then, for any $p \geq 0$, the following error estimate holds:
  \begin{equation} \label{eq:errorestimate}
    \inner{z_T}{U(T) - u(T)} = \ED + \EG + \EC,
  \end{equation}
  where
  \begin{equation*}
    \begin{split}
      |\ED| &\leq S_D \, \|U(0) - u(0)\|, \\
      |\EG| &\leq S_G \, C_p \max_{[0,T]} \left\{\kk^{p+1} (\|[U]\|/\kk + \|R\|)\right\}, \\
      |\EC| &\leq S_C \, C_p' \max_{0\leq k \leq p} \max_{[0,T]} \|\kk^{-1} \RR_k\|.
    \end{split}
  \end{equation*}
  Here, $C_p$ and $C_p'$ are constants depending only on~$p$.
  The stability factors $S_D$, $S_G$, and $S_C$ are defined by
  \begin{equation*}
    \begin{split}
      S_D &= \|z(0)\|, \\
      S_G &= \int_0^T \|z^{(p+1)}\| \dt, \\
      S_C &= \int_0^T \|\pi z\| \dt.
    \end{split}
  \end{equation*}
\end{theorem}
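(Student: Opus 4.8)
The plan is to start from the error representation of Theorem~\ref{th:errorrepresentation} and distribute its three terms among $\ED$, $\EG$, $\EC$. The initial-data term $\inner{z(0)}{U(0)-u(0)}$ is taken to be $\ED$, and Cauchy--Schwarz immediately gives $|\ED| \le \|z(0)\|\,\|U(0)-u(0)\| = S_D\,\|U(0)-u(0)\|$. The crux is the remaining contribution $\sum_m \inner{z(t_{m-1})}{[U]_{m-1}} + \int_0^T \inner{z}{R}\dt$, which I would split by inserting the interpolant: write $z = \pi z + (z - \pi z)$, define $\EC$ as the part carried by $\pi z$ and $\EG$ as the part carried by $z - \pi z$. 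Since this is an exact decomposition, summing $\ED+\EG+\EC$ reproduces $\inner{z_T}{U(T)-u(T)}$ exactly, so~\eqref{eq:errorestimate} holds as stated (an identity, not merely an inequality).

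For $\EC$, I would first rewrite the $\pi z$-part in terms of the discrete residual. Using $(\pi z)(t_{m-1}^+) = \sum_{k} \lambda_k(0)\, z(t_{m-1}+\tau_k\kk_m)$ together with the definition of $\pi$ on $(t_{m-1},t_m]$, linearity gives, on each interval, $\inner{(\pi z)(t_{m-1}^+)}{[U]_{m-1}} + \int_{t_{m-1}}^{t_m}\inner{\pi z}{R}\dt = \sum_{k=0}^p \inner{z(t_{m-1}+\tau_k\kk_m)}{\RR_k^m}$, which is exactly the definition~\eqref{eq:discreteresidual} of $\RR_k^m$ contracted against the nodal values of $z$. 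Hence $\EC = \sum_m\sum_k \inner{z(t_{m-1}+\tau_k\kk_m)}{\RR_k^m}$. Applying Cauchy--Schwarz termwise and factoring out $\max_{k}\max_{[0,T]}\|\kk^{-1}\RR_k\|$ reduces the bound to controlling $\sum_m \kk_m \sum_k \|z(t_{m-1}+\tau_k\kk_m)\|$. On the reference interval the map from nodal coefficients to the interpolating polynomial is a linear isomorphism of a finite-dimensional space, so all norms on it are equivalent; this yields $\sum_k \|z(t_{m-1}+\tau_k\kk_m)\| \le C_p'\, \kk_m^{-1}\int_{t_{m-1}}^{t_m}\|\pi z\|\dt$ with $C_p'$ depending only on $p$ (and the fixed nodes), and summation over $m$ produces $S_C = \int_0^T\|\pi z\|\dt$.

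For $\EG$, I would bound the $(z-\pi z)$-part by standard interpolation-error estimates, but in a form that yields the $L^1$ norm $\int\|z^{(p+1)}\|$ demanded by $S_G$ rather than a sup norm. Using the Peano-kernel representation of $z - \pi z$ and rescaling to an interval of length $\kk_m$, I obtain $\int_{t_{m-1}}^{t_m}\|z-\pi z\|\dt \le C_p\,\kk_m^{p+1}\int_{t_{m-1}}^{t_m}\|z^{(p+1)}\|\dt$ for the residual part and $\|z(t_{m-1}) - (\pi z)(t_{m-1}^+)\| \le C_p\,\kk_m^{p}\int_{t_{m-1}}^{t_m}\|z^{(p+1)}\|\dt$ for the jump part; the one-power difference is precisely what produces the combination $\|[U]\|/\kk + \|R\|$. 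Bounding the residual integral by $\|R\|$ on each interval, collecting the two contributions, and pulling out the global maximum $\max_{[0,T]}\{\kk^{p+1}(\|[U]\|/\kk + \|R\|)\}$ leaves $\sum_m\int_{t_{m-1}}^{t_m}\|z^{(p+1)}\|\dt = S_G$.

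I expect the main obstacle to be making the interpolation estimates produce $\int\|z^{(p+1)}\|$ instead of a crude $\kk^{p+1}\max\|z^{(p+1)}\|$, since this is what keeps $S_G$ sharp; the Peano-kernel route handles it but requires care in the scaling of the kernel and in the separate treatment of the endpoint value $(\pi z)(t_{m-1}^+)$, which is an extrapolation when $\tau_0 \neq 0$ yet still admits the same error representation on the closed interval. A secondary technical point is the norm-equivalence constant $C_p'$ for $\EC$: it is finite by finite-dimensionality but depends on the node placement $\{\tau_k\}$, so I would make clear that the claimed dependence ``only on $p$'' is understood with the nodes regarded as fixed by the choice of method.
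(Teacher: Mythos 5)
Your proposal is correct and follows essentially the same route as the paper: the same splitting of the error representation by inserting $\pi z$, the same nodal-basis expansion identifying the $\pi z$-part with $\sum_{m,k}\inner{z(t_{m-1}+\tau_k\kk_m)}{\RR_k^m}$, and the same interpolation bound for the $(z-\pi z)$-part. Your added detail on the Peano-kernel form of the interpolation error and on the finite-dimensional norm equivalence behind $C_p'$ simply fills in steps the paper leaves implicit.
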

\begin{proof}
  Starting from the error representation of
  Theorem~\ref{th:errorrepresentation}, we add and subtract the
  degree~$p$ left-continuous piecewise polynomial interpolant~$\pi z$
  defined above to obtain
  \begin{equation*}
    \begin{split}
      \inner{z_T}{e(T)}
      &=
      \inner{z(0)}{e(0)} \\
      &\quad+
      \sum_{m=1}^M
      \left[
      \inner{z(t_{m-1}) - \pi z(t_{m-1}^+)}{[U]_{m-1}} +
      \int_{t_{m-1}}^{t_m} \inner{z - \pi z}{R} \dt
      \right] \\
      &\quad+
      \sum_{m=1}^M
      \left[
      \inner{\pi z(t_{m-1}^+)}{[U]_{m-1}} +
      \int_{t_{m-1}}^{t_m} \inner{\pi z}{R} \dt
      \right] \\
      &\equiv
      \ED + \EG + \EC.
    \end{split}
  \end{equation*}
  We first note that the data error~$\ED$ is bounded by $\|z(0)\|
  \, \|e(0)\| \equiv S_D \, \|e(0)\|$. By an interpolation estimate,
  we may estimate the discretisation error~$\EG$ by
  \begin{equation*}
    \begin{split}
      \EG
      &\leq
      \sum_{m=1}^M
      \left[
      \|z(t_{m-1}) - \pi z(t_{m-1}^+)\| \, \|[U]_{m-1}\| +
      \int_{t_{m-1}}^{t_m} \|z - \pi z\| \, \|R\| \dt
      \right] \\
      &\leq
      C_p \max_{[0,T]} \left\{\kk^{p+1} (\|[U]\|/\kk + \|R\|) \right\}
      \sum_{m=1}^M \int_{t_{m-1}}^{t_m} \|z^{(p + 1)}\| \dt,
    \end{split}
  \end{equation*}
  where $\sum_{m=1}^M \int_{t_{m-1}}^{t_m} \|z^{(p+1)}\| \dt =
  \int_0^T \|z^{(p+1)}\| \dt \equiv S_G$ and $C_p$ is an interpolation
  constant. Finally, to estimate the computational error, we expand
  $\pi z$ in the nodal basis to obtain
  \begin{equation*}
    \begin{split}
      \EC
      &=
      \sum_{m=1}^M
      \sum_{k=0}^p
      \left\langle
      z(t_{m-1} + \tau_k\kk_m),
      \lambda_k(0) {[U]_{m-1}} +
      \int_{t_{m-1}}^{t_m} \lambda_k((t - t_{m-1})/\kk_m) R(t) \dt
      \right\rangle \\
      &=
      \sum_{m=1}^M
      \sum_{k=0}^p
      \inner{z(t_{m-1} + \tau_k\kk_m)}{\RR_k^m}
      =
      \sum_{m=1}^M
      \kk_m
      \sum_{k=0}^p
      \inner{z(t_{m-1} + \tau_k\kk_m)}{\kk_m^{-1} \RR_k^m} \\
      &\leq
      \sum_{m=1}^M
      \kk_m
      \sum_{k=0}^p
      \|z(t_{m-1} + \tau_k\kk_m)\| \, \|\kk_m^{-1} \RR_k^m\| \\
      &\leq
      \max_{0\leq k \leq p} \max_{[0,T]} \|\kk^{-1} \RR_k\|
      \sum_{m=1}^M \kk_m \sum_{k=0}^p  \|z(t_{m-1} + \tau_k\kk_m)\| \\
      &\leq
      C_p' \max_{0 \leq k \leq p} \max_{[0,T]} \|\kk^{-1} \RR_k\|
      \sum_{m=1}^M \int_{t_{m-1}}^{t_m} \|\pi z\| \dt,
    \end{split}
  \end{equation*}
  where $\sum_{m=1}^M \int_{t_{m-1}}^{t_m} \|\pi z\| \dt = \int_0^T
  \|\pi z\| \dt \equiv S_C$ and $C_p'$ is a constant depending only
  on $p$. This completes the proof.
\end{proof}

\begin{remark}
  Theorem~\ref{th:errorestimate} estimates the size of
  $\inner{z_T}{U(T) - u(T)}$ for any given vector~$z_T$. We may thus
  estimate any bounded linear functional of the error at the final
  time by choosing $z_T$ as the corresponding Riesz representer. In
  particular, we may estimate the error in any component $u_i(T)$ of
  the solution by setting $z_T$ to the $i$th unit vector for $i =
  1,2,\ldots,N$.
\end{remark}

Theorem~\ref{th:errorestimate} extends standard a~posteriori error
estimates for systems of ordinary differential equations in two
ways. First, it does not make any assumption on the underlying
numerical method. Second, it includes the effect of numerical
round-off errors. A similar estimate can be found
in~\cite{logg_multi-adaptive_2003-1} but only for the simplest case of
the piecewise linear \cg{1}~method (Crank--Nicolson).

We now investigate the propagation of numerical round-off errors in
more detail. As in the proof of Theorem~\ref{th:errorestimate}, $\EC$
denotes the computational error defined by
\begin{equation}
  \EC =
  \sum_{m=1}^M
  \left[
  \inner{\pi z(t_{m-1}^+)}{[U]_{m-1}} +
  \int_{t_{m-1}}^{t_m} \inner{\pi z}{R} \dt \right].
\end{equation}
Theorem~\ref{th:errorestimate} bounds the computational error in terms
of the discrete residual defined in~\eqref{eq:discreteresidual}. The
discrete residual tests the continuous residual $R = \dot{U} - f$
of~\eqref{eq:u'=f} against polynomials of degree $p$. In particular,
it tests how well the numerical method satisfies the relation
\begin{equation} \label{eq:timestep}
  U(t_m) = U(t_{m-1}) + \int_{t_{m-1}}^{t_m} f(U, \cdot) \dt.
\end{equation}
With a machine precision of size $\emach$, our best hope is that the
numerical method satisfies~\eqref{eq:timestep} to within a tolerance
of size $\emach$ for each component of the vector~$U$. It
follows by the Cauchy--Schwarz inequality that
\begin{equation*}
  \max_{k,m} \|\RR_k^m\| \leq \emach \sqrt{N}.
\end{equation*}
We thus have the following corollary.

\begin{corollary} \label{cor:ec}
  The computational error $\EC$ of Theorem~\ref{th:errorestimate} is
  bounded by
  \begin{equation*}
    |\EC| \leq S_C \, C_p' \frac{\emach\sqrt{N}}{\min_{[0,T]} \kk}.
  \end{equation*}
\end{corollary}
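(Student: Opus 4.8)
The plan is to start from the bound on the computational error established in Theorem~\ref{th:errorestimate}, namely
\[
  |\EC| \leq S_C \, C_p' \max_{0 \leq k \leq p} \max_{[0,T]} \|\kk^{-1} \RR_k\|,
\]
and to replace the factor $\max_{k} \max_{[0,T]} \|\kk^{-1} \RR_k\|$ by an explicit bound in terms of the machine precision $\emach$. The essential input is the estimate $\max_{k,m}\|\RR_k^m\| \leq \emach\sqrt{N}$, which is justified in the text immediately preceding the corollary: the best one can hope for is that the numerical method satisfies the integral relation~\eqref{eq:timestep} componentwise to within a tolerance of size $\emach$, and the Cauchy--Schwarz inequality then converts the componentwise bound into the Euclidean-norm bound with the factor $\sqrt{N}$.

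The remaining step is purely algebraic. First I would pull the scalar factor $\kk^{-1}$ out of the norm, writing $\|\kk_m^{-1}\RR_k^m\| = \kk_m^{-1}\|\RR_k^m\|$, since $\kk_m > 0$. Then, to bound the maximum over all $k$ and all intervals $m$, I would use that the numerator is bounded by $\emach\sqrt{N}$ uniformly in $k$ and $m$, while the denominator $\kk_m$ is minimised over the partition; hence
\[
  \max_{0 \leq k \leq p} \max_{[0,T]} \|\kk^{-1}\RR_k\|
  = \max_{k,m} \frac{\|\RR_k^m\|}{\kk_m}
  \leq \frac{\emach\sqrt{N}}{\min_{[0,T]}\kk}.
\]
Substituting this into the bound on $|\EC|$ from Theorem~\ref{th:errorestimate} yields exactly the claimed estimate $|\EC| \leq S_C\,C_p'\,\emach\sqrt{N}/\min_{[0,T]}\kk$, completing the proof.

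There is no genuine obstacle here: the corollary is a direct specialisation of the general estimate, obtained by inserting the worst-case discrete residual. The only point that requires care — and it is a modelling assumption rather than a mathematical difficulty — is the claim $\max_{k,m}\|\RR_k^m\|\leq\emach\sqrt{N}$, which encodes the heuristic that round-off in satisfying~\eqref{eq:timestep} is of size $\emach$ per component. Since the excerpt establishes this bound before stating the corollary, I would simply invoke it. It is worth noting that the scaling $\min_{[0,T]}\kk$ in the denominator is what ultimately produces the $\kk^{-1/2}$ behaviour advertised in the main result once the residuals are modelled probabilistically rather than taken at their deterministic worst case, but that refinement lies beyond the scope of this corollary and need not enter the proof.
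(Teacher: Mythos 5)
Your proposal is correct and follows exactly the route the paper intends: the corollary is stated as an immediate consequence of the bound $|\EC| \leq S_C\,C_p'\max_{k}\max_{[0,T]}\|\kk^{-1}\RR_k\|$ from Theorem~\ref{th:errorestimate} combined with the estimate $\max_{k,m}\|\RR_k^m\|\leq\emach\sqrt{N}$ derived just before it, and your algebraic step of bounding $\kk_m^{-1}$ by $1/\min_{[0,T]}\kk$ is the only thing needed. Your remark correctly identifies that the componentwise $\emach$ bound is a modelling assumption rather than a theorem, which matches the paper's own framing.
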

This indicates that the computational error scales like $\kk^{-1}$; the
smaller the time step, the larger the computational error. At first,
this seems non-intuitive, but it is a simple consequence of the fact
that a smaller time step leads to a larger number of time steps and
thus a larger number of round-off errors.

The estimate of Corollary~\ref{cor:ec} is overly pessimistic. It is
based on the assumption that round-off errors accumulate without
cancellation. In practice, the round-off error is sometimes positive
and sometimes negative. As a simple model, we make the assumption that
the round-off error is a random variable which takes the value
$+\emach$ or $-\emach$ with equal probabilities,
\begin{equation} \label{eq:random}
  (\RR_k^m)_i =
  \left\{
  \begin{array}{rcl}
    +\emach, &\quad& p = 0.5, \\
    -\emach, &\quad& p = 0.5, \\
  \end{array}
  \right.
\end{equation}
for all $m, k, i$. In reality, round-off errors are not uncorrelated
random variables, but the simple model~(\ref{eq:random}) may still
give useful results. For a discussion on the applicability of random
models to the propagation of round-off errors,
see~\cite{higham2002accuracy} (Section~2.8) and~\cite{Hairer2008}.

Under the assumption~\eqref{eq:random}, we find that the expected size
of the computational error scales like $\kk^{-1/2}$. As we shall see in
the next section, this is also confirmed by numerical experiments.  A
similar result is obtained in a series of papers by
Li~et.~al~\cite{li2000,li2001}. In \cite{li2000}, it is first noted
that there exists an \emph{optimal time step}; that is, a time step
for which discretisation errors and round-off errors balance. In
\cite{li2001}, it is then found that the round-off error is inversely
proportional to the square root of the time step. These results are
confirmed by the following theorem.

\begin{theorem} \label{th:random}
  Assume that the round-off error is a random variable of size
  $\pm\emach$ with equal probabilities. Then the root-mean squared
  expected computational error $\EC$ of Theorem~\ref{th:errorestimate}
  is bounded by
  \begin{equation*}
    (E[\EC^2])^{1/2}
    \leq
    S_{{C_2}} \, \sqrt{C_p'} \frac{\emach}{\min_{[0,T]}\sqrt{\kk}},
  \end{equation*}
  where
  $S_{C_2} = \left(\int_0^T \|\pi z\|^2 \dt\right)^{1/2}$
  and $C_p'$ is a constant depending only on~$p$.
\end{theorem}
\begin{proof}
  As in the proof of Theorem~\ref{th:errorestimate}, we obtain
  \begin{equation*}
    \EC
    =
    \sum_{m=1}^M \sum_{k=0}^p
    \inner{z(t_{m-1} + \tau_k\kk_m)}{\RR_k^m}
    =
    \sum_{m=1}^M \sum_{k=0}^p \sum_{i=1}^N
    z_i(t_{m-1} + \tau_k\kk_m) (\RR_k^m)_i,
  \end{equation*}
  where by assumption $(\RR_k^m)_i = \emach x_{mki}$ and $x_{mki} =
  \pm 1$ with probability $0.5$ and $0.5$, respectively. It follows
  that
  \begin{equation*}
    \begin{split}
      \EC^2
      &=
      \sum_{m,n=1}^M \sum_{k,l=0}^p \sum_{i,j=1}^N
      z_i(t_{m-1} + \tau_k \kk_m) z_j(t_{n-1} + \tau_l \kk_n) \,
      \emach^2 x_{mki} x_{nlj} \\
      &=
      \sum_{(m,k,i) = (n,l,j)}
      z_i^2(t_{m-1} + \tau_k \kk_m) \, \emach^2 x_{mki}^2 \\
      &+
      \sum_{(m,k,i) \neq (n,l,j)}
      z_i(t_{m-1} + \tau_k \kk_m) z_j(t_{n-1} + \tau_l \kk_n) \,
      \emach^2 x_{mki} x_{nlj}.
    \end{split}
  \end{equation*}
  We now note that $x_{mki}^2 = 1$. Furthermore, $y_{ijklmn} = x_{mki}
  x_{nlj}$ is a random variable which takes the values $+1$ and $-1$
  with equal probabilities. We thus find that
  \begin{equation*}
    \begin{split}
      E[\EC^2]
      &=
      \emach^2
      \sum_{m=1}^M \sum_{k=0}^p \sum_{i=1}^N
      z_i^2(t_{m-1} + \tau_k \kk_m) + 0 \\
      &=
      \emach^2 \sum_{m=1}^M \sum_{k=0}^p \|z(t_{m-1} + \tau_k \kk_m)\|^2 \\
      &\leq
      \frac{\emach^2}{\min_{[0,T]}\kk} \sum_{m=1}^M \kk_m \sum_{k=0}^p \|z(t_{m-1} + \tau_k \kk_m)\|^2
      \leq
      S_{{C_2}}^2 \, C_p' \frac{\emach^2}{\min_{[0,T]}\kk},
    \end{split}
  \end{equation*}
  where $S_{C_2} = \left(\int_0^T \|\pi z\|^2
  \dt\right)^{1/2}$. This completes the proof.
\end{proof}

\begin{remark}
  By additional assumptions on the smoothness of the dual
  solution~$z$, one may relate the error to the expected value of
  the distance from the starting point for a random walk which is
  $\sqrt{n}$ for $n$ steps (for $n$ large), and prove a similar
  estimate for the expected absolute value of the computational error,
  $E[|\EC|] \sim S_C \emach / \sqrt{\kk}$, where $S_C = \int_0^T
  \|\pi z\| \dt$.
\end{remark}

\begin{remark}
  By Cauchy--Schwarz, the stability factor $S_C$ of
  Theorem~\ref{th:errorestimate} is bounded by $\sqrt{T} S_{C_2}$.
\end{remark}

\begin{remark}
  In \cite{Hairer2008}, the effect of numerical round-off error
  accumulation and its relation to Brownian motion (Brouwer's law) are
  discussed in the context of symplectic methods for Hamiltonian
  systems. It should be noted that although the assumptions of
  Theorem~\ref{th:random} are similar to those in \cite{Hairer2008},
  namely that the process of error accumulation for round-off errors
  is random rather than systematic, the point under discussion in the
  present work is different: the effect of time step size rather than
  the effect of the interval length.
\end{remark}

We conclude this section by discussing how the above error estimates
apply to the particular methods used in this work. The estimate of
Theorem~\ref{th:errorestimate} is valid for any numerical method but
is of particular interest as an a~posteriori error estimate for the
finite element methods \cg{q} and \dg{q}
(see~\cite{hulme_discrete_1972,hulme_one-step_1972,johnson_error_1988,delfour_discontinuous_1981}).

The continuous and discontinuous Galerkin methods \cg{q} and \dg{q}
are formulated by requiring that the residual $R = \dot{U} -
f(U,\cdot)$ be orthogonal to a suitable space of test functions. By
making a piecewise polynomial ansatz, the solution may be computed on
a sequence of intervals partitioning the computational domain $[0, T]$
by solving a system of equations for the degrees of freedom on each
consecutive interval. For a particular choice of numerical quadrature
and degree~$q$, the \cg{q} and \dg{q} methods both reduce to standard
implicit Runge--Kutta methods.

In the case of the \cg{q} method, the numerical solution~$U$ is a
continuous piecewise polynomial of degree~$q$ that on each
interval~$(t_{n-1}, t_n]$ satisfies
\begin{equation} \label{eq:cgq}
  \int_{t_{n-1}}^{t_n} v \, R \dt = 0
\end{equation}
for all $v \in \mathcal{P}^{q-1}([t_{n-1}, t_n])$. It follows that the
discrete residual~\eqref{eq:discreteresidual} is zero if $p \leq q -
1$. However, this is only true in exact arithmetic. In practice, the
discrete residual is nonzero and measures how well we solve the \cg{q}
equations~\eqref{eq:cgq}, including round-off errors and errors from
numerical quadrature.\footnote{To account for additional quadrature
  errors present if the integral of~\eqref{eq:cgq} is approximated by
  quadrature, one may add and subtract an interpolant $\pi f$ of the
  right-hand side $f$ in the proof of Theorem~\ref{th:errorestimate}
  to obtain an additional term $\EQ = S_Q \max_{[0, T]} \|\pi f - f\|$
  where $S_Q = \int_0^T \|z\| \dt \approx S_C$.} For the \cg{q}
method, we further expect the residual to converge as $\kk^q$. Thus,
choosing $p = q - 1$ in Theorem~\ref{th:errorestimate}, one may expect
the error for the \cg{q} method to scale as
\begin{equation} \label{eq:convergence}
  E = \ED + \EG + \EC
  \sim S(T) \left( \emach + \kk^{2q} + \kk^{-1/2} \emach \right).
\end{equation}
Here, $S(T)$ denotes a generic stability factor. As in
Theorem~\ref{th:errorestimate}, each term contributing to the total
error is in reality multiplied by a particular stability factor. In
practice, however, the growth rates of the different stability factors
are similar and related by a constant factor.

\section{Numerical results}
\label{sec:numericalresults}

In this section, we present numerical results in support of
Theorem~\ref{th:errorestimate} and Theorem~\ref{th:random}.  The
examples are the well-known Lorenz system and Van der Pol
oscillator. Both examples illustrate the competing convergence rates
for discretisation errors, decreasing rapidly for smaller time steps,
and computational errors (round-off error), increasing for
smaller time steps.

The numerical results were obtained using the authors' software package
Tanganyika \cite{tanganyika-web} which implements the methods
described in \cite{logg_multi-adaptive_2003-1} using high precision
numerics provided by GMP~\cite{Granlund15}.  A complete code for
reproducing all results in this paper is available at
\cite{kehlet_2015_16671}. For details on the implementation,
see~\cite{kehlet_analysis_2010}.

\subsection{The Lorenz system}

We first consider the well-known Lorenz
system~\cite{lorenz_deterministic_1963}, a simple system of three
ordinary differential equations exhibiting rapid amplification of
numerical errors:
\begin{equation} \label{eq:lorenz}
  \left\{
  \begin{aligned}
    \dot{x} &= \sigma (y-x), \\
    \dot{y} &= rx - y -xz,   \\
    \dot{z} &= xy - bz,      \\
    \end{aligned}
  \right.
\end{equation}
where $\sigma = 10$, $b = 8/3$, and $r = 28$. We take $u(0) = (1, 0, 0)$.

The Lorenz system is deterministically chaotic. In the context of
a~posteriori error analysis of numerical methods for the solution of
ODE initial value problems, as in the present work, this means that
solutions may, in principle, be computed over arbitrarily long time
intervals, but to a rapidly increasing cost as function of the final
time $T$.

\subsubsection{Computability and growth of stability factors}

In~\cite{estep_pointwise_1998}, computability was demonstrated and
quantified for the Lorenz on time intervals of moderate length ($T =
30$) on a standard desktop computer. This result was further extended
to time $T = 48$ in~\cite{logg_multi-adaptive_2003}, using high order
($\|e(T)\| \sim \kk^{30}$) finite element methods. Solutions over
longer time intervals have been computed based on shadowing (the
existence of a nearby exact solution),
see~\cite{coomes_rigorous_1995}, but for unknown initial data.
Related work on high-precision numerical methods applied to the Lorenz
system include~\cite{viswanath2004fractal}
and~\cite{jorba_software_2005}.

In~\cite{KehletLogg2014}, the authors study the computability of the
Lorenz system in detail on the time interval $[0,
1000]$. Computability is here defined as the maximal final time $T =
T(\emach)$ such that a solution may be computed with a given machine
precision $\emach$. The computability may be estimated by examining
the growth rate of the stability factors appearing in the error
estimate of Theorem~\ref{th:errorestimate}. By numerical solution of
the dual problem, it was found in~\cite{KehletLogg2014} that the
stability factors grow exponentially as $S(T) \sim 10^{0.388 T} \sim
10^{0.4 T}$; see Figure~\ref{fig:stability}.
\begin{figure}
  \begin{center}
    \includegraphics[width=\halffigurewidth]{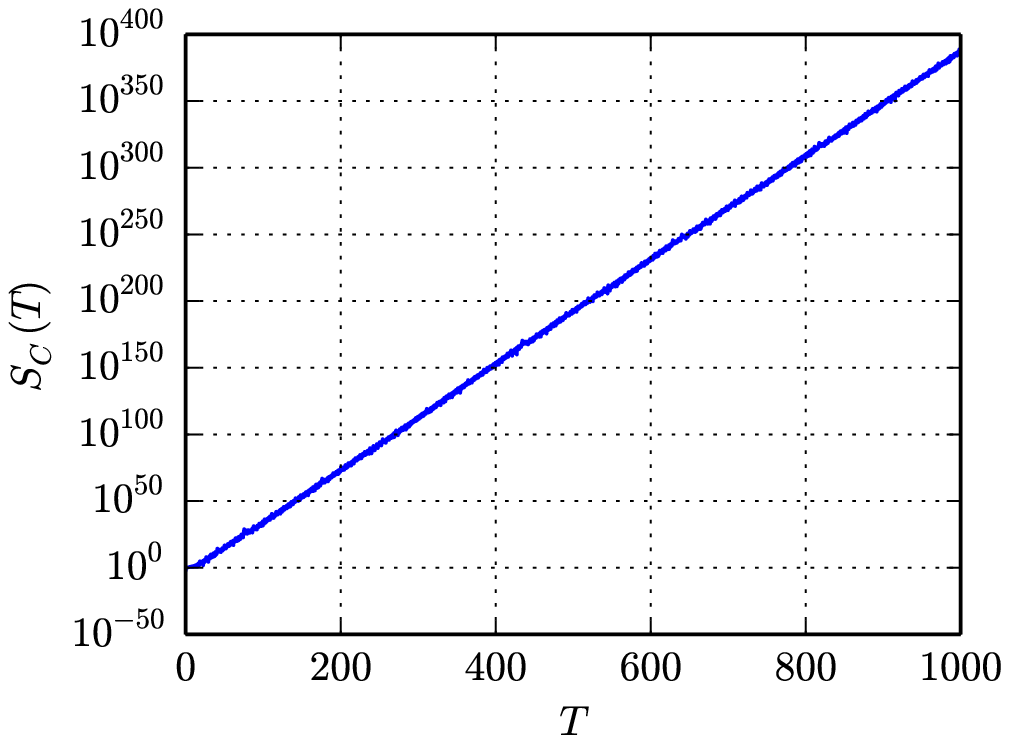}
    \includegraphics[width=\halffigurewidth]{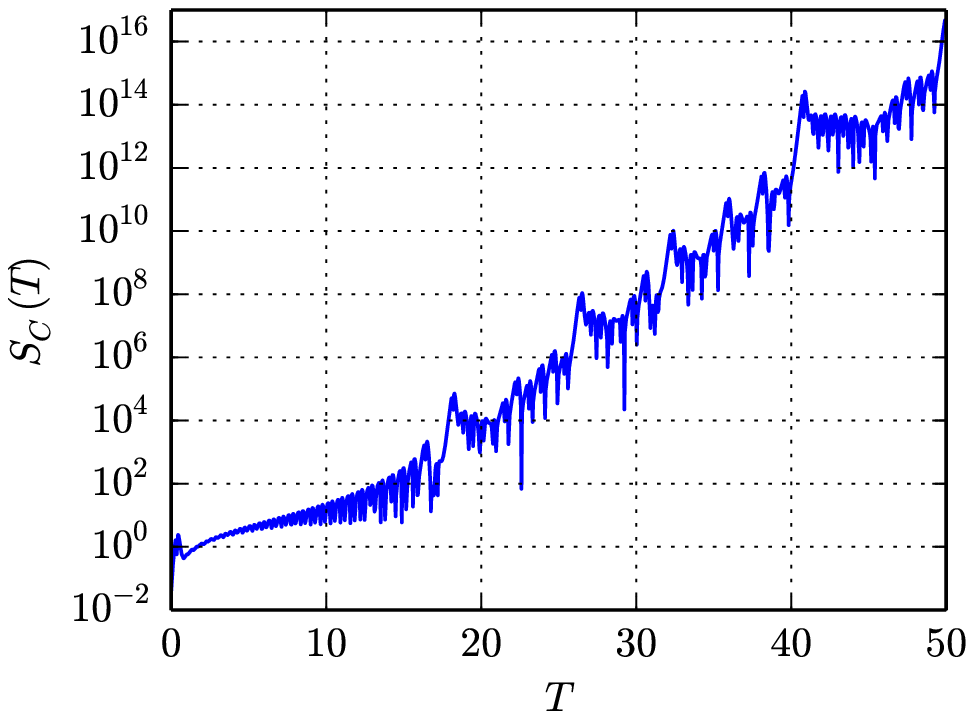}
    \caption{Growth of the stability factor $S_C$ (left) for the
      Lorenz system on the time interval~$[0, 1000]$ and a detailed
      plot on the time interval $[0, 50]$ (right).}
    \label{fig:stability}
  \end{center}
\end{figure}
By examining in detail the terms contributing to the error
estimate~\eqref{eq:errorestimate}, one finds that an optimal step
size is given by $\kk \sim \emach^{\frac{1}{2q + \sfrac{1}{2}}}$ and
that the computability of the Lorenz system is given by
\begin{equation}
  T(\emach) \sim 2.5 \nmach,
\end{equation}
where $\nmach = -\log_{10} \emach$ is the number of significant
digits. Based on this estimate, one may conclude that with 16-digit
precision, the Lorenz system is computable on $[0, 40]$, while
using 400 digits, the Lorenz system is computable on $[0, 1000]$.

In Figure~\ref{fig:solution}, we plot the solution of the Lorenz
system on the interval~$[0, 1000]$. The solution was computed with
\cg{100}, which is a method of order $2q = 200$, a time step of size
$\kk = 0.0037$, 420-digit precision arithmetic\footnote{The requested
  precision from GMP was 420 digits. The actual precision is somewhat
  higher depending on the number of significant bits chosen by GMP.},
and a tolerance for the discrete residual of size $\emach \approx 2.26
\cdot 10^{-424}$. The very rapid (exponential) accumulation of
numerical errors makes the Lorenz ``fingerprint'' displayed in
Figure~\ref{fig:solution} useful as a reference for verification of
solutions of the Lorenz system. If a solution is only slightly wrong,
the error is quickly magnified so that the error becomes visible by a
direct inspection of a plot of the solution.

\begin{figure}
  \begin{center}
    \includegraphics[width=\figurewidth]{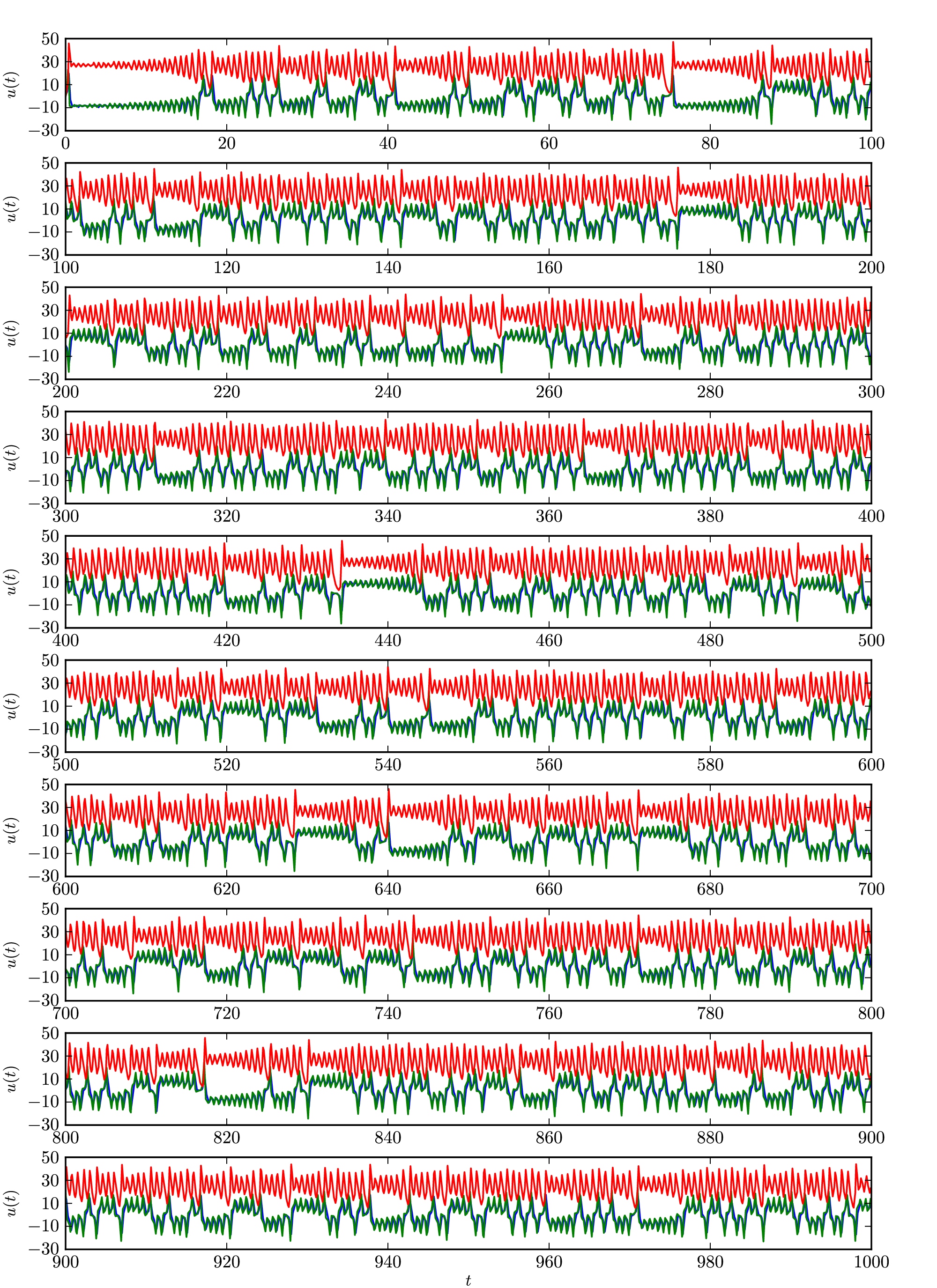}
    \caption{Accurate reference solution for the three components of the Lorenz system on the
      interval~$[0, 1000]$ with the $x$ and $y$ components plotted in
      blue and green respectively (and almost overlaid) and the $z$
      component in red.}
    \label{fig:solution}
  \end{center}
\end{figure}

\subsubsection{Order of convergence and optimal step size}

We next investigate how the accumulated error at final time depends on
the size of the time step $\kk$. According to~\eqref{eq:convergence}, we
expect the error to scale like $\kk^{2q} + \kk^{-\sfrac{1}{2}}
\emach$. Thus, for a gradually decreasing step size, we expect the
error to decrease at a rate of $\kk^{2q}$. However, as the time step
becomes smaller the second term $\kk^{-\sfrac{1}{2}}$ will grow and, for
small enough $\kk$, be the dominating contribution to the error.  This
picture is confirmed by the results presented in
Figure~\ref{fig:errors} for two numerical methods, the $2$nd order
\cg{1} and the $10$th order \cg{5} method. Of particular interest in
this figure is the very short range in which the $10$th order
convergence of the \cg{5} method is recovered; with only 16 digits of
precision, the dominating contribution to the total error is the
accumulated round-off error. We also note that for both methods, one
may find an optimal size of the time step $\kk$ for which both
contributions to the total error are balanced.

\begin{figure}
  \begin{center}
    \includegraphics[width=\halffigurewidth]{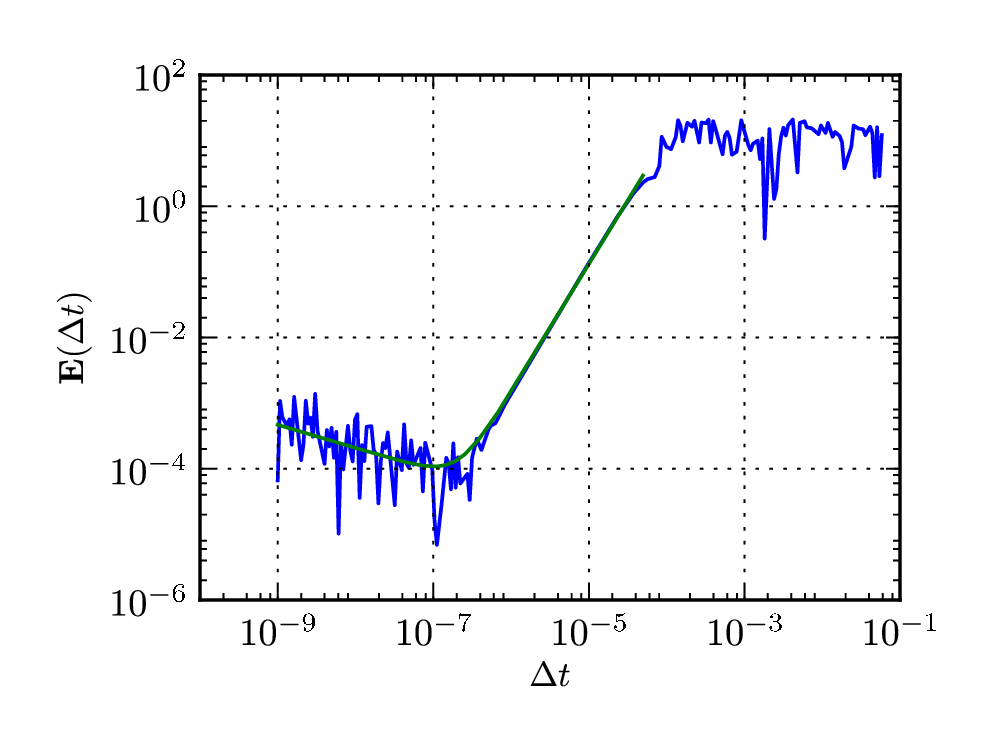}
    \includegraphics[width=\halffigurewidth]{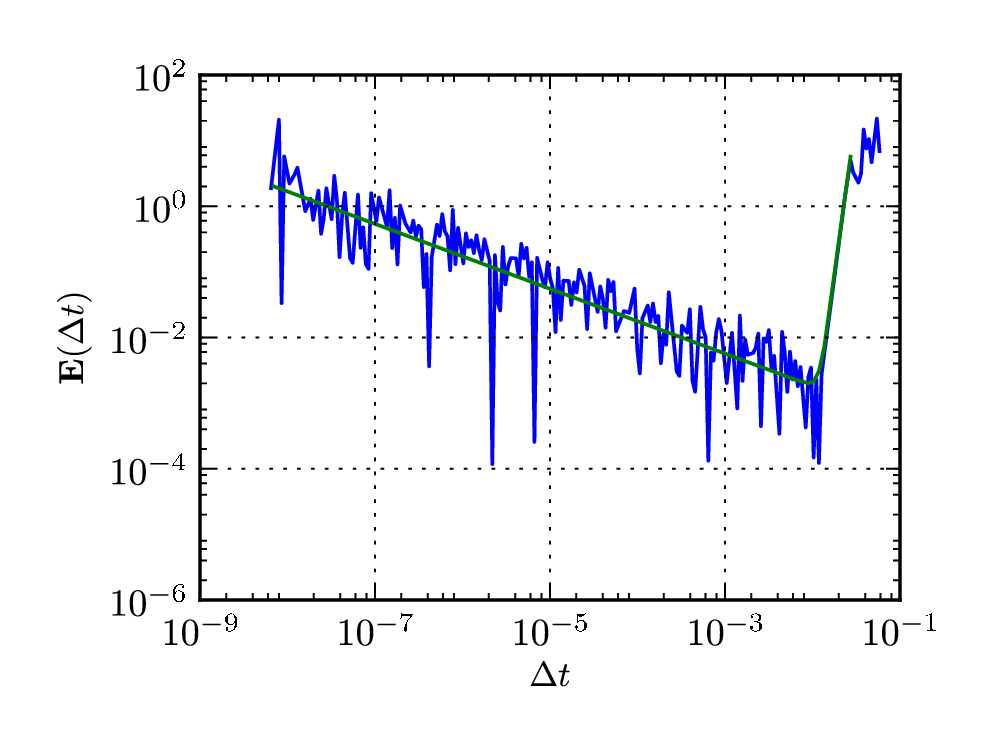}
    \caption{Error at time $T = 30$ for the \cg{1} solution (left) and
      at time $T = 40$ for the \cg{5} solution (right) of the Lorenz
      system.  The slopes of the green lines are $-0.35 \approx -1/2$
      and $1.95 \approx 2$ for the \cg{1} method. For the \cg{5}
      method, the slopes are $-0.49 \approx -1/2$ and $10.00 \approx
      10$.}
    \label{fig:errors}
  \end{center}
\end{figure}

In Figure~\ref{fig:lorenz-errors-3d}, results are presented for an
investigation of the influence on both the step size $\kk$ and the
polynomial degree $q$ in the \cg{q} method. As expected, the minimal
error is obtained when both the polynomial degree $q$ and the step
size are \emph{maximal}. Maximising the step size minimizes the
influence of numerical round-off errors (the term
$\kk^{-\sfrac{1}{2}}$), and as a consequence the polynomial degree $q$
must be large in order to suppress the discretisation error (the term
$\kk^{2q}$).

\begin{figure}
  \begin{center}
    \includegraphics[width=\figurewidth]{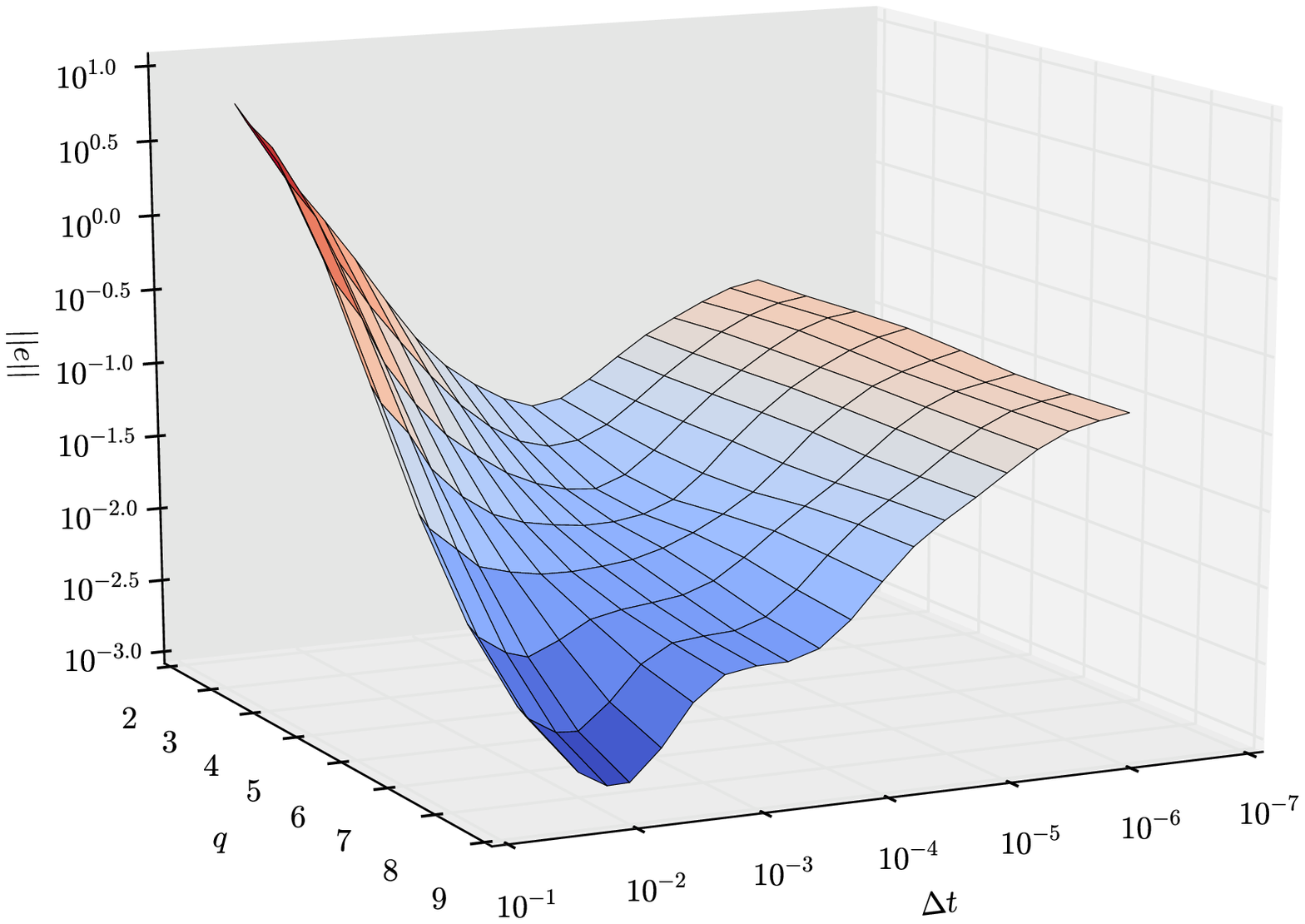}
    \includegraphics[width=\halffigurewidth]{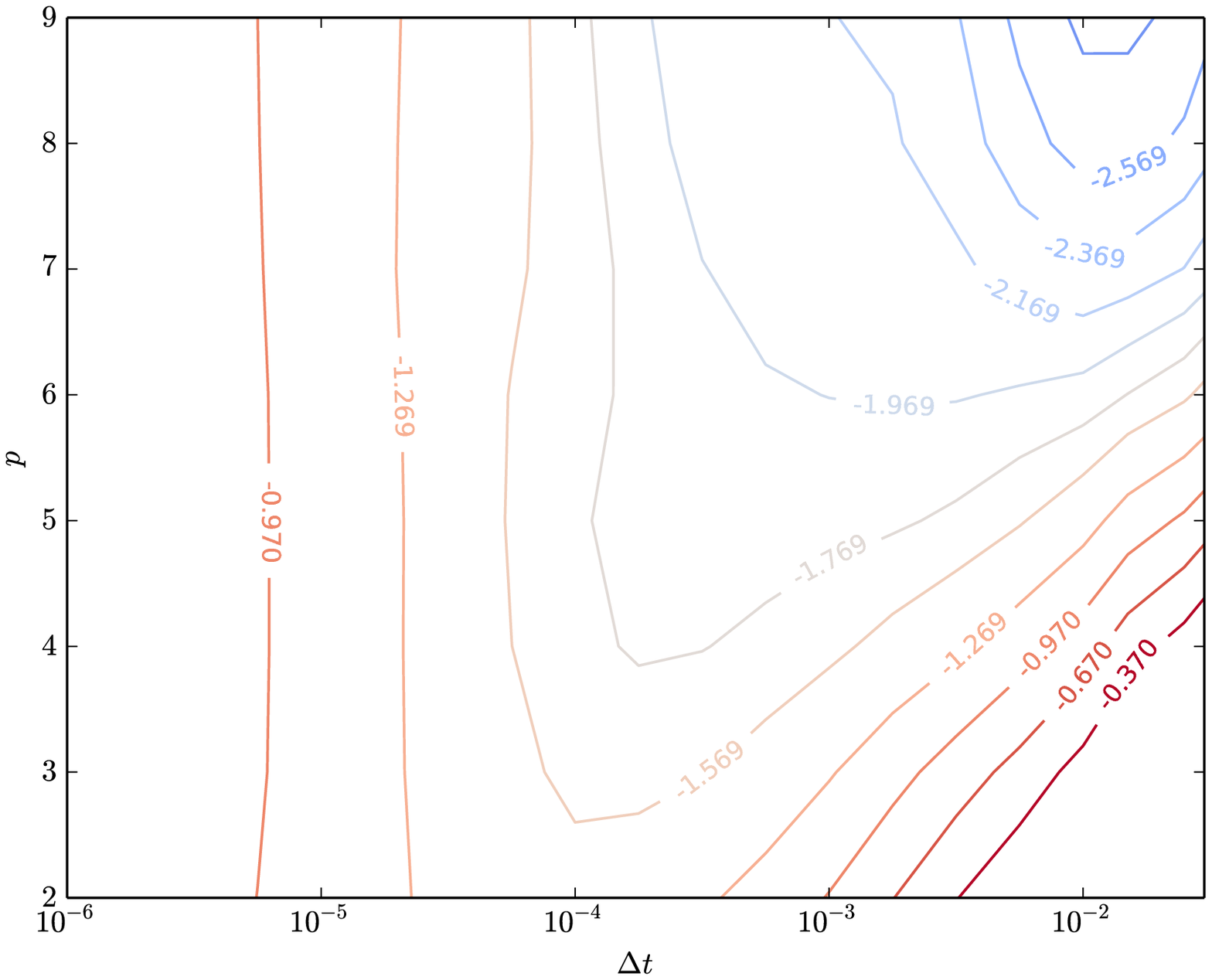}
    \includegraphics[width=\halffigurewidth]{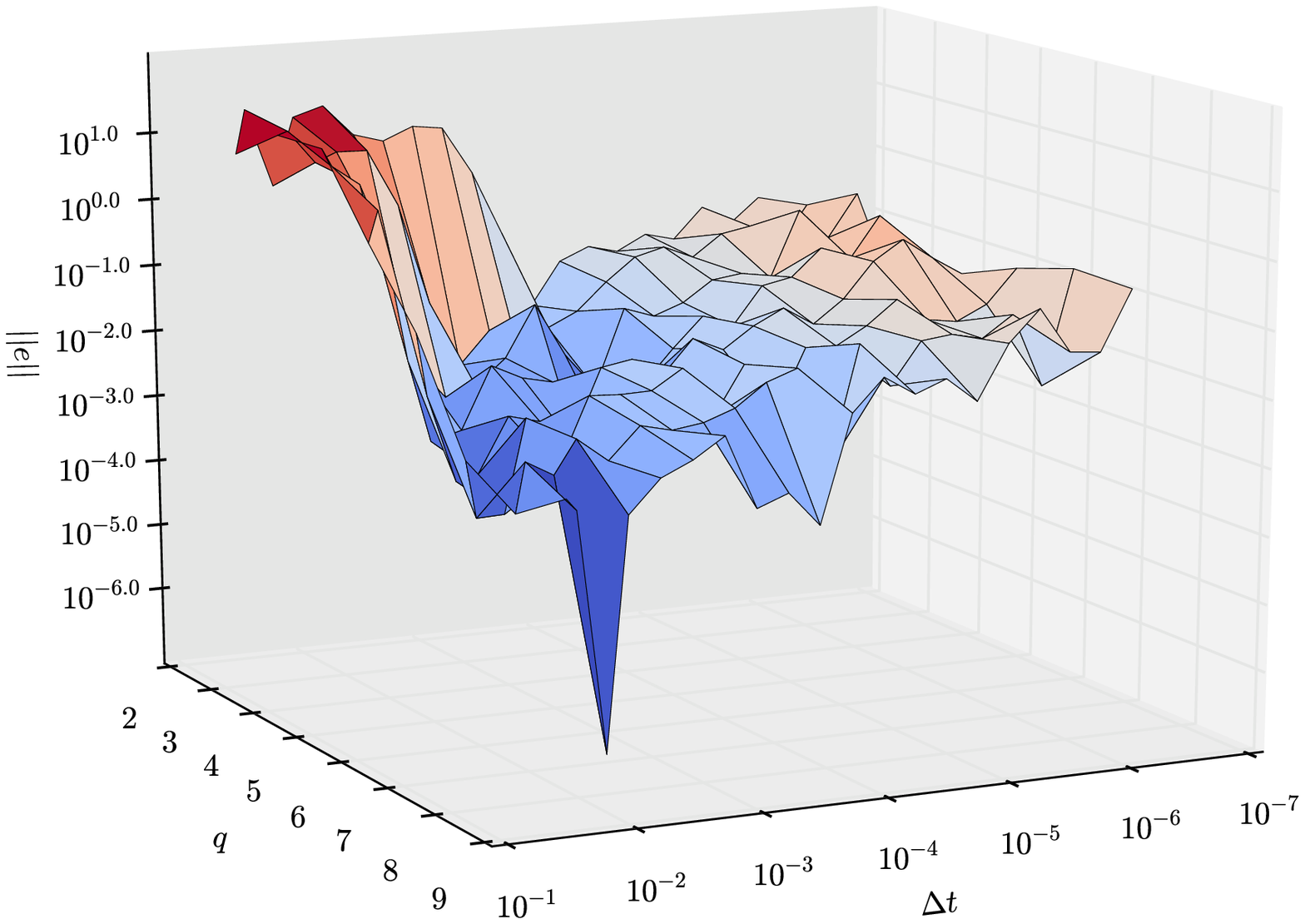}
    \caption{\emph{Top:} The accumulated total error at final time $T
      = 40$ for numerical solutions of the Lorenz system with
      different step size $\kk$ and polynomial degree $q$ using the
      \cg{q} method. Due to the random nature of the round-off errors,
      the data has been smoothed. \emph{Lower left:} Contour lines of
      the smoothed data. \emph{Lower right:} The raw data included for
      completeness.}
    \label{fig:lorenz-errors-3d}
  \end{center}
\end{figure}

\subsection{The Van der Pol oscillator}

We next consider the Van der Pol oscillator, given by
the second order ODE
\[
 \ddot{u} = \mu(1-u^2)\dot{u} - u.
\]
Rewritten as a system of first order equations, it reads
\begin{equation} \label{eq:vanderpol}
  \left\{
  \begin{aligned}
    \dot{u}_1 &= u_2, \\
    \dot{u}_2 &= \mu(1-u_1^2)u_2 - u_1.  \\
    \end{aligned}
  \right.
\end{equation}
We compute solutions on $[0, 2\mu]$ for $\mu = 10^3$ and $u(0) = (2,
0)$. This configuration is used as a test problem for ODE solvers in
\cite{Testset2008}. For large values of the parameter $\mu$, the
solution quickly approaches a limit cycle.


As for the Lorenz system, the stability factors(s) grow very rapidly
(exponentially), as indicated in
Figures~\ref{fig:vdpol-stability-full}
and~\ref{fig:vdpol-stability-detail}. However, the rapid growth is
localised in time close to $T \approx 807 \cdot n$ for $n \in
\mathbb{N}$. For times before or after these points of instability,
the stability factor is of moderate size. This means that solutions
are difficult to compute only at points near the points of
instability; that is, a solution may be easily computed at time $t =
1000$ but not at time $t = 807$.

\begin{figure}
  \begin{center}
    \includegraphics[width=\figurewidth]{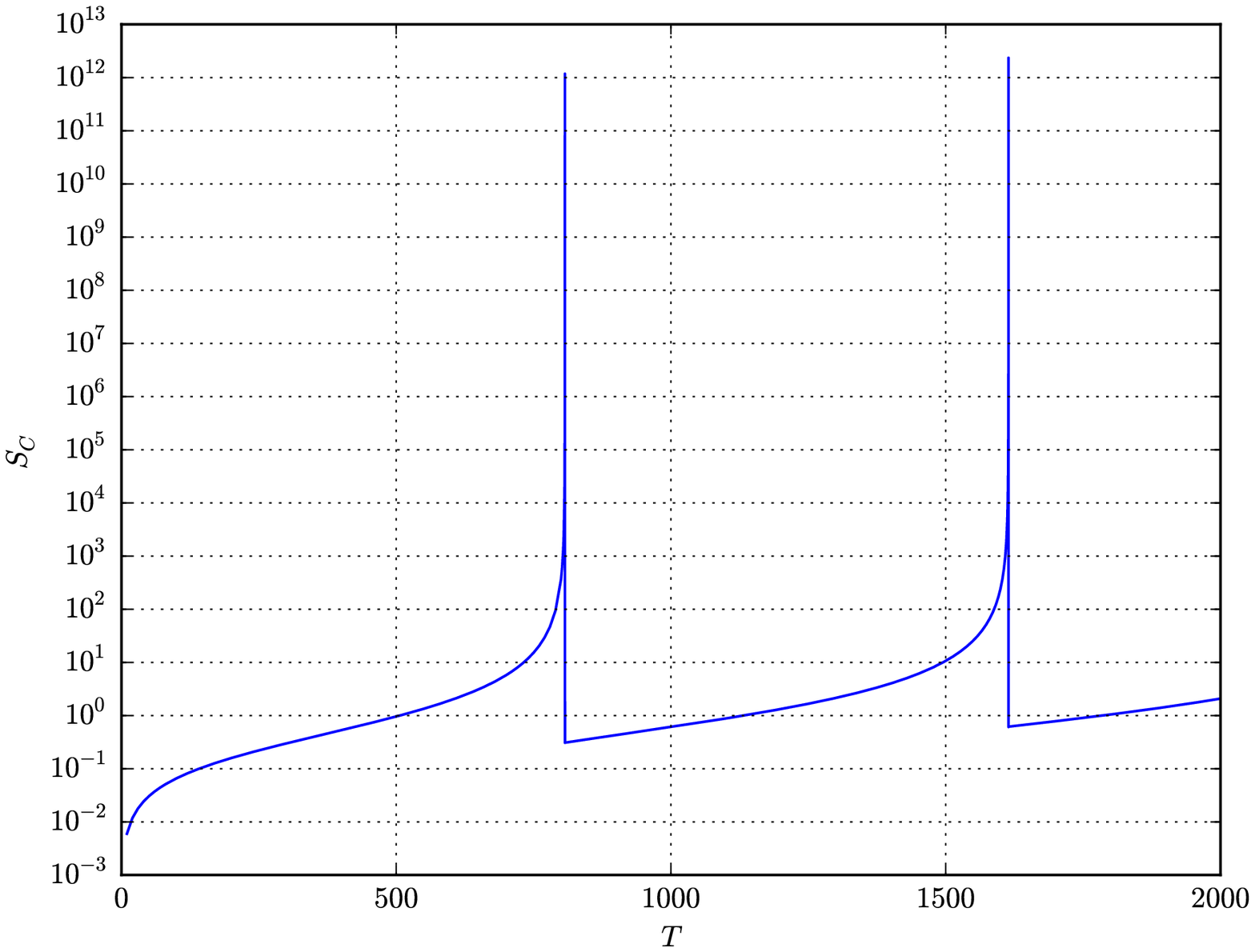}
    \caption{Growth of the computational stability factor $S_C(T)$ for
      the Van der Pol oscillator~\eqref{eq:vanderpol}.}
    \label{fig:vdpol-stability-full}
  \end{center}
\end{figure}

\begin{figure}
  \begin{center}
    \includegraphics[width=\halffigurewidth]{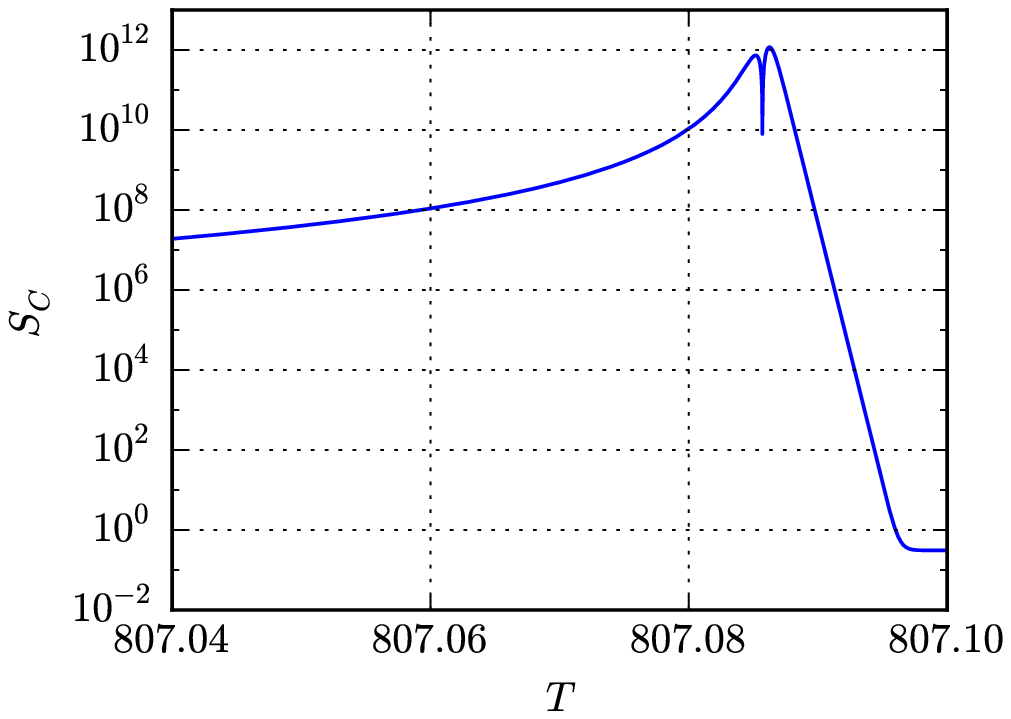}
    \includegraphics[width=\halffigurewidth]{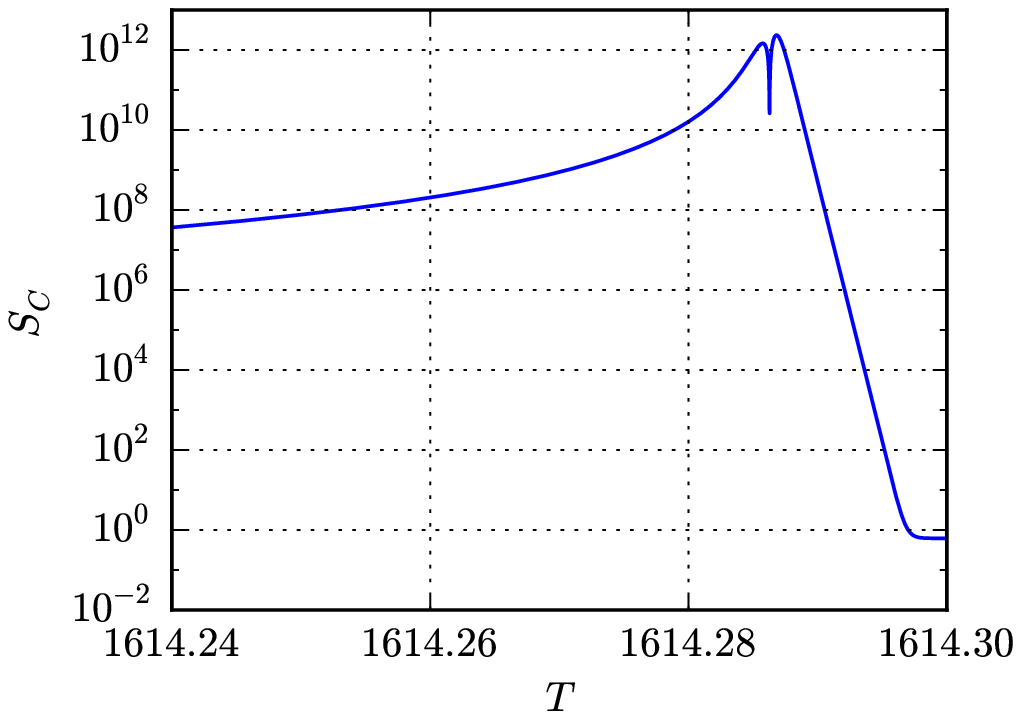}
    \caption{Detail of growth of the computational stability factor
      $S_C(T)$ for the Van der Pol oscillator~\eqref{eq:vanderpol}.}
    \label{fig:vdpol-stability-detail}
  \end{center}
\end{figure}

This rapid growth of stability factors is reflected in the growth of
the error for numerical solutions as shown in
Figures~\ref{fig:vdpol-error-full} and~\ref{fig:vdpol-error-detail}.
Examining these plots in more detail, we notice that in accordance
with the error estimate of Theorem~\ref{th:errorestimate} and
Equation~\eqref{eq:convergence}. For the numerical solutions studied
in Figures~\ref{fig:vdpol-error-full}
and~\ref{fig:vdpol-error-detail}, the discretisation error dominates
for low order methods. As the polynomial degree $q$ is increased, the
error decreases until the point when the computational error starts to
dominate.  We notice that the baseline error is of size $E \sim
10^{-14}$ for the highest order methods when the stability factor is
of size $S \sim 10^{2}$, and the error spikes at $E \sim 10^{-4}$ at
times when the stability factor takes on large values $S \sim
10^{12}$. This is in good agreement with the error estimate: $E \sim S
\cdot 10^{-16}$.

\begin{figure}
  \begin{center}
    \includegraphics[width=\figurewidth]{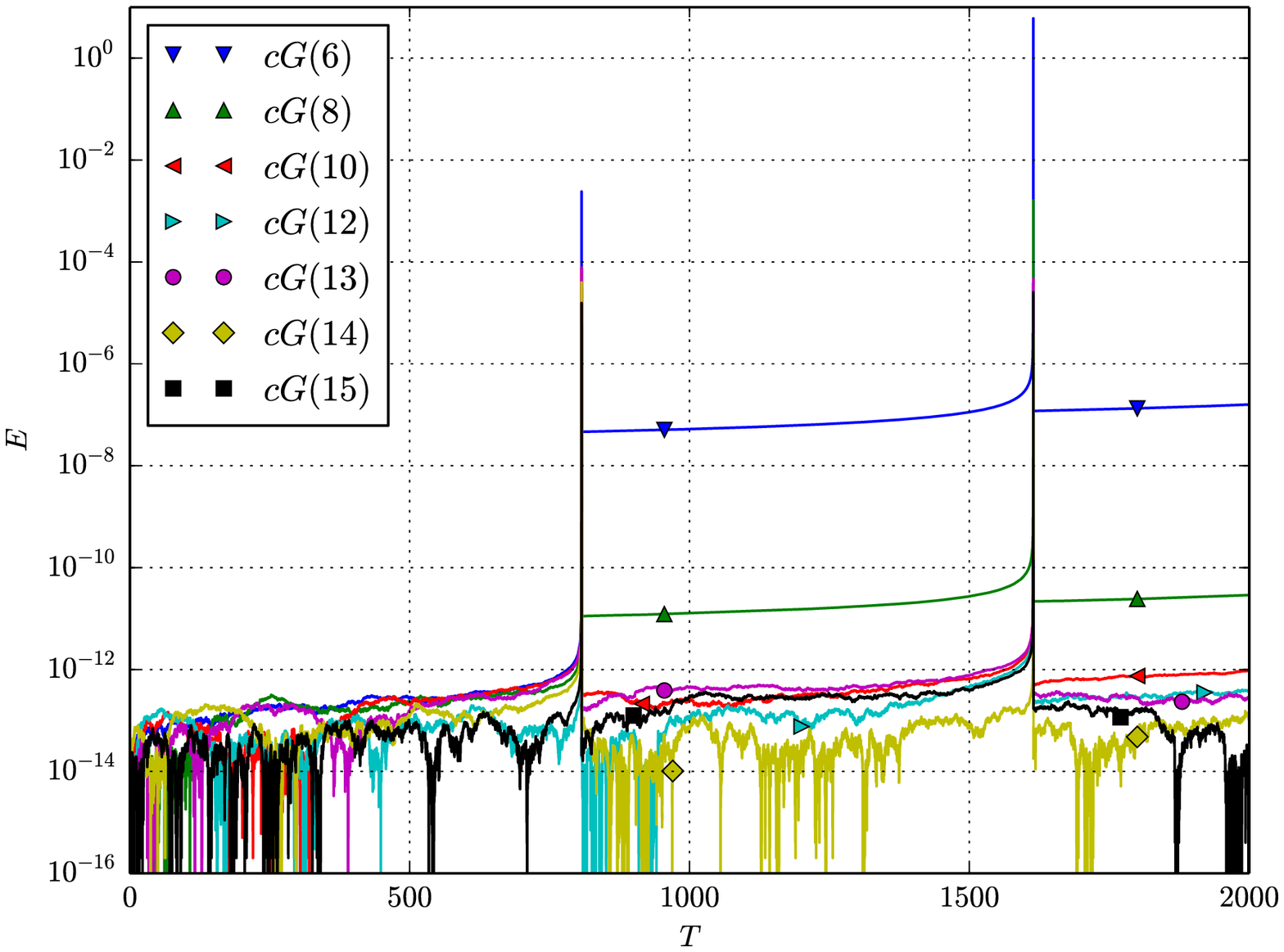}
    \caption{Growth of error for solutions of the Van der Pol
      oscillator~\eqref{eq:vanderpol} computed with time step $\kk=10^{-3}$.}
    \label{fig:vdpol-error-full}
  \end{center}
\end{figure}

\begin{figure}
  \begin{center}
    \includegraphics[width=\halffigurewidth]{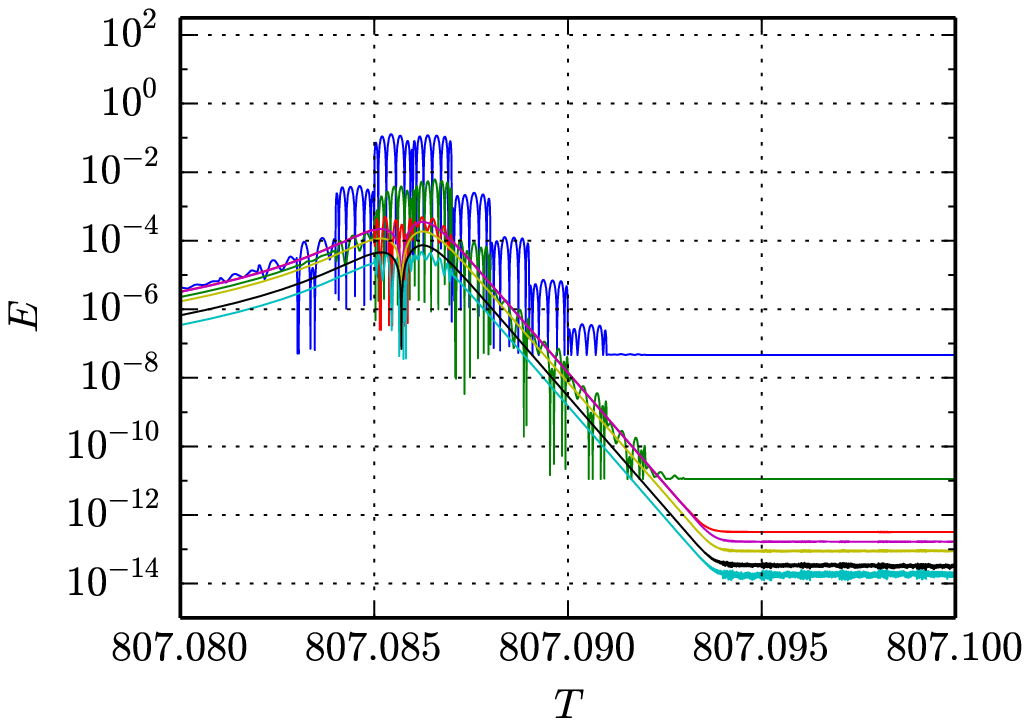}
    \includegraphics[width=\halffigurewidth]{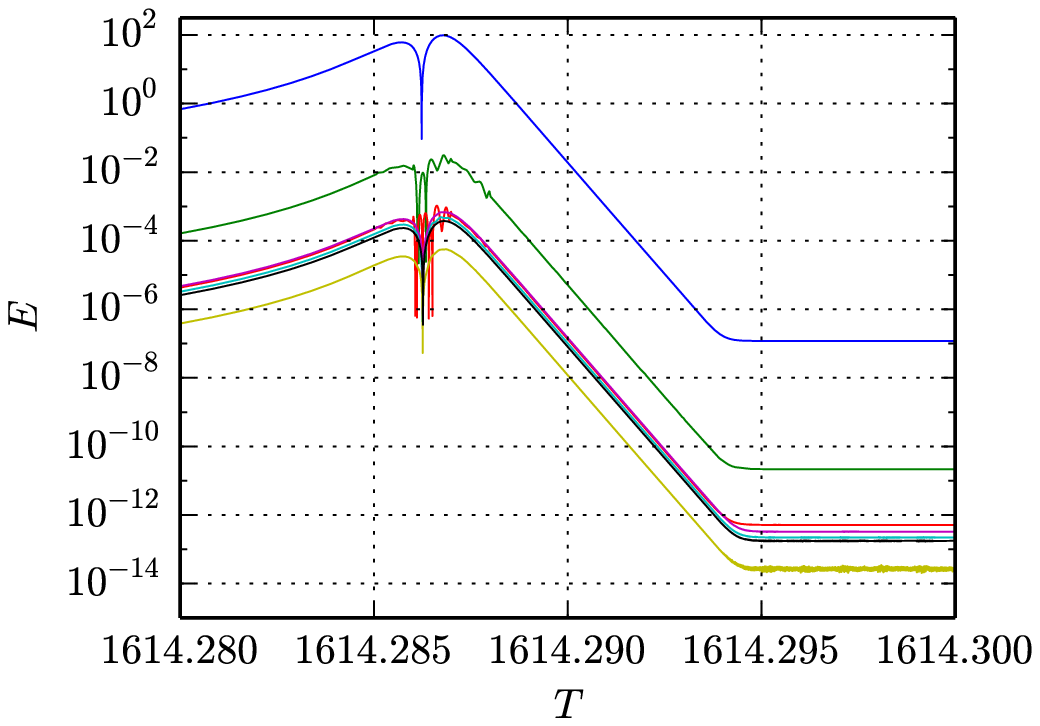}
    \caption{Detail of growth of error for solutions of the Van der Pol
      oscillator~\eqref{eq:vanderpol} computed with time step $\kk=10^{-3}$.}
    \label{fig:vdpol-error-detail}
  \end{center}
\end{figure}

\pagebreak

\section{Conclusions}

We have proved error estimates accounting for data, discretisation and
computational (round-off) errors in the numerical solution of initial
value problems for ordinary differential equations. These error
estimates quantify the accumulation rates for numerical round-off
error as inversely proportional to the square root of the step size,
and proportional to a specific computable stability factor. The effect
of round-off errors is mostly pronounced for large values of the
stability factor, which includes both chaotic dynamical systems as
well as long-time integration of systems which exhibit only a moderate
growth of the stability factor.

\pagebreak
\bibliographystyle{spmpsci}
\bibliography{paper}


\end{document}